\numberwithin{equation}{section}
\theoremstyle{plain}
\newtheorem{theorem}{Theorem}[section]
\newtheorem{cor}[theorem]{Corollary}
\newtheorem{rem}[theorem]{Remark}
\def\X{\mathbb{Y}}
\begin{document}
\title[Hardy inequalities on metric measure spaces, IV: The case $p=1$]{Hardy inequalities on metric measure spaces, IV: The case $p=1$ }

\author[Michael Ruzhansky]{Michael Ruzhansky}
\address{
	Michael Ruzhansky:
	\endgraf
	Department of Mathematics: Analysis, Logic and Discrete Mathematics
	\endgraf
	Ghent University, Belgium
	\endgraf
	and
	\endgraf
	School of Mathematical Sciences
	\endgraf
	Queen Mary University of London
	\endgraf
	United Kingdom
	\endgraf
	{\it E-mail-} {\rm michael.ruzhansky@ugent.be}
}

\author[A. Shriwastawa]{Anjali Shriwastawa}
\address{
 Anjali Shriwastawa:
 \endgraf
  DST-Centre for Interdisciplinary Mathematical Sciences  \endgraf
  Banaras Hindu University, Varanasi-221005, India
  \endgraf
  {\it E-mail-} {\rm anjalisrivastava7077@gmail.com}}
  
  \author[B.Tiwari]{Bankteshwar Tiwari}
\address{
 Bankteshwar Tiwari :
 \endgraf
  DST-Centre for Interdisciplinary Mathematical Sciences  \endgraf
  Banaras Hindu University, Varanasi-221005, India
  \endgraf
  {\it E-mail-} {\rm banktesht@gmail.com}}


\subjclass[2010]{26D10, 22E30, 45J05.}
\keywords{Integral Hardy inequalities, homogeneous Lie groups, metric measure spaces, quasi-norm, Riemannian manifold with negative curvature, hyperbolic spaces}

 \begin{abstract} In this paper, we investigate the two-weight Hardy inequalities on metric measure space  possessing polar decompositions for the case $p=1$ and $1 \leq q <\infty.$ This result complements the Hardy inequalities obtained in  \cite{RV} in the case $1< p\le q<\infty.$ The case $p=1$ requires a different argument and does not follow as the limit of known inequalities for $p>1.$ As a byproduct, we also obtain the best constant in the established inequality.   We give examples obtaining new weighted Hardy inequalities on homogeneous Lie groups, on hyperbolic spaces  and on Cartan-Hadamard manifolds for the case $p=1$ and $1\le q<\infty.$ 
 
\end{abstract}
\maketitle
\allowdisplaybreaks
\section{Introduction} \label{Intro}
The main purpose of this article is to study the Hardy type inequalities on metric measure spaces 
for the case $p=1$ and $1\leq q<\infty$. We begin with the brief overview of  Hardy inequalities
in the literature. 
 \\ Let us recall  the  following celebrated Hardy inequality \cite{H2} which plays an important role in many areas such as analysis, probability and partial differential equations:
 \begin{align}\label{eq1.1}
     \int_a^\infty\left(\frac{\int_b^x f(t)\,\text{d}t}{x}\right)^p\text{d}x\le\left(\frac{p}{p-1}\right)^p\int_b^\infty f^p(x)\,\text{d}x,
 \end{align}
where $p>1,\,b>0,$ and $f\ge0$ is a non-negative function. Several forms of the Hardy inequalities \eqref{eq1.1} have been known.The original discrete version of this inequality goes back in \cite{H1}.  Since a lot of work has been done on Hardy inequalities in different forms and different settings, it is difficult to give a complete overview of the literature, let us refer to books and surveys by Opic and Kufner \cite{OK}, Davies \cite{D}, Kufner, Persson and Samko \cite{KPS}. We also refer \cite{BEL,EE,FS,GKPW,GKP,H1,H2,M,PS,RST,RS,RV1} and references therein for a historic overview as well as some recent developments on this subject.

  The first author and Verma \cite{RV} obtained several characterizations of weights for two-weight Hardy inequalities to hold on general metric measure spaces possessing polar decompositions for the range $1<p \leq q<\infty$ (see, \cite{RV1} for the case $0<q<p$ and $1<p<\infty$).  As a consequence, one deduced new weighted Hardy inequalities on $\mathbb{R}^n$, on homogeneous groups, on hyperbolic spaces and on Cartan–Hadamard manifolds. The first author with Kassymov and Suragan  established a reverse version of the integral Hardy inequality on metric
measure spaces with two negative exponents for the range $q \le p < 0$ (see \cite{KRS}).  Recently, the authors of this paper established sharp weighted integral Hardy inequality and conjugate integral Hardy inequality on homogeneous Lie groups  with any quasi-norm for the range $1<p\leq q<\infty$ (see \cite{RST}). We also calculated the precise value of sharp constants in respective inequalities, improving the result of \cite{RV} in the case of homogeneous groups. Let us recall the following Hardy inequality on metric measure spaces with a polar decomposition  from \cite{RV}. A metric space $(\mathbb X,d)$ with a Borel measure $dx$ is said to have  a {\em polar decomposition} at $a\in{\mathbb X}$ if  there is a locally integrable function $\lambda \in L^1_{loc}$  such that for all $f\in L^1(\mathbb X)$ we have
   \begin{equation}\label{EQ:polarintro}
   \int_{\mathbb X}f(x)dx= \int_0^{\infty}\int_{\Sigma_r} f(r,\omega) \lambda(r,\omega) d\omega_{r} dr,
   \end{equation}
    for the set $\Sigma_r=\{x\in\mathbb{X}:d(x,a)=r\}\subset \mathbb X$ with a measure on it denoted by $d\omega_r$, and $(r,\omega)\rightarrow a $ as $r\rightarrow0.$ We denote $|x|_a:=d(x,a)$.
  \begin{theorem}[\cite{RV}] \label{IntHar1}
Let $1<p\le q <\infty$ and let $s>0$. Let $\mathbb{X} $ be a metric measure space with a polar decomposition at $a$. 
Let $u,v> 0$ be measurable functions  positive a.e in $\mathbb{X}$  such that $u\in L^1(\mathbb X\backslash \{a\})$ and $v^{1-p'}\in L^1_{loc}(\mathbb X)$. Denote
\begin{align}
U(x):= { \int_{\mathbb X\backslash{\mathbb{B}(a,|x|_a )}} u(y) dy} \quad\text{and}\quad V(x):= \int_{\mathbb{B}(a,|x|_a  )}v^{1-p'}(y)dy . \nonumber
\end{align} 

Then the inequality
\begin{equation}\label{EQ:Hardy1}
\bigg(\int_\mathbb X\bigg(\int_{\mathbb{B}(a,\vert x \vert_a)}\vert f(y) \vert dy\bigg)^q u(x)dx\bigg)^\frac{1}{q}\le C\bigg\{\int_{\mathbb X} {\vert f(x) \vert}^pv(x)dx\bigg\}^{\frac1p}
\end{equation}
holds for all measurable functions $f:\mathbb{X}\to{\mathbb C}$ if and only if  the following condition holds:

\begin{equation}
    \sup\limits_{x\not=a} \bigg(U^\frac{1}{q}(x) V^\frac{1}{p'}(x)\bigg)<\infty.
\end{equation}

\end{theorem}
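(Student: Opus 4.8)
The plan is to transfer the inequality \eqref{EQ:Hardy1} from $\mathbb X$ to the half-line $(0,\infty)$ by means of the polar decomposition \eqref{EQ:polarintro}, and then to invoke the classical Muckenhoupt--Bradley characterization of the scalar two-weight Hardy inequality in the regime $1<p\le q<\infty$. The two directions are handled separately: sufficiency of the condition goes through the reduction, while its necessity is obtained directly by inserting a well-chosen test function into \eqref{EQ:Hardy1}.

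For the ``if'' direction, fix a measurable $f$ and set
\[
g(r):=\int_{\Sigma_r}|f(r,\omega)|\,\lambda(r,\omega)\,d\omega_r ,
\]
so that \eqref{EQ:polarintro} gives $\int_{\mathbb{B}(a,|x|_a)}|f(y)|\,dy=\int_0^{|x|_a}g(s)\,ds$ and the left-hand side of \eqref{EQ:Hardy1} equals $\left(\int_0^\infty\left(\int_0^r g(s)\,ds\right)^q\widetilde{u}(r)\,dr\right)^{1/q}$, where $\widetilde{u}(r):=\int_{\Sigma_r}u(r,\omega)\lambda(r,\omega)\,d\omega_r$. Next, on each sphere $\Sigma_r$ I would apply H\"older's inequality with exponents $p$ and $p'$ to the pointwise factorization $|f|\lambda=(|f|^pv\lambda)^{1/p}(v^{1-p'}\lambda)^{1/p'}$ (legitimate because $1/p+1/p'=1$; this is the step that genuinely uses $p\ge 1$ and it is sharp), obtaining $g(r)\le h(r)^{1/p}w(r)^{1/p'}$ with $h(r):=\int_{\Sigma_r}|f|^pv\lambda\,d\omega_r$ and $w(r):=\int_{\Sigma_r}v^{1-p'}\lambda\,d\omega_r$; thus $\int_0^\infty h(r)\,dr=\int_{\mathbb X}|f(x)|^pv(x)\,dx$ and $\int_0^r w(s)\,ds=V(x)$ whenever $|x|_a=r$. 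Writing $G:=h^{1/p}w^{1/p'}$ one has $h=G^pw^{1-p}$, so \eqref{EQ:Hardy1} reduces to the one-dimensional inequality
\[
\left(\int_0^\infty\left(\int_0^r G(s)\,ds\right)^q\widetilde{u}(r)\,dr\right)^{1/q}\le C\left(\int_0^\infty G(r)^p\,w(r)^{1-p}\,dr\right)^{1/p}.
\]

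This last inequality is the standard two-weight Hardy inequality on $(0,\infty)$, which for $1<p\le q<\infty$ holds with a finite constant if and only if
\[
\mathcal A:=\sup_{r>0}\left(\int_r^\infty\widetilde{u}(s)\,ds\right)^{1/q}\left(\int_0^r\bigl(w(s)^{1-p}\bigr)^{1-p'}\,ds\right)^{1/p'}<\infty ,
\]
and, moreover, the optimal $C$ in \eqref{EQ:Hardy1} is comparable to $\mathcal A$, which is how one reads off quantitative information on the best constant. Using the elementary identity $(1-p)(1-p')=1$ one gets $\bigl(w^{1-p}\bigr)^{1-p'}=w$, while $\int_r^\infty\widetilde{u}(s)\,ds=U(x)$ and $\int_0^r w(s)\,ds=V(x)$ for $|x|_a=r$; hence $\mathcal A=\sup_{x\ne a}U(x)^{1/q}V(x)^{1/p'}$, which is finite by assumption, and the ``if'' direction follows. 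For the converse, fix $r_0>0$ and $x_0$ with $|x_0|_a=r_0$ and test \eqref{EQ:Hardy1} with $f=v^{1-p'}\chi_{\mathbb{B}(a,r_0)}$, first under the harmless assumption $0<V(x_0)<\infty$ and otherwise truncating $v^{1-p'}$ on balls and letting the truncation exhaust $\mathbb X$. Because $(1-p')p+1=1-p'$, the right-hand side equals $V(x_0)^{1/p}$, whereas for every $x$ with $|x|_a\ge r_0$ the inner integral is exactly $V(x_0)$, so the left-hand side is at least $V(x_0)U(x_0)^{1/q}$; dividing through and taking the supremum over $x_0\ne a$ yields the stated condition.

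The main obstacle is precisely the reduction step: the right-hand side $\int_{\mathbb X}|f|^pv$ is not of the form $\int_0^\infty g(r)^p\,(\text{weight})\,dr$ for a fixed weight, so one cannot simply push the norm of $f$ through the polar decomposition. The sharp H\"older estimate on each sphere $\Sigma_r$ is what repairs this, converting the problem into a genuine scalar Hardy inequality; after that the argument is the known one-dimensional theory together with the bookkeeping identities $(1-p)(1-p')=1$ and $(1-p')p+1=1-p'$. (This is also the precise point where the present paper's case $p=1$ will demand a different device, since the exponent $p'$ and the Muckenhoupt dual weight $v^{1-p'}$ degenerate.)
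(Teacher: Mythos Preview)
This statement is quoted from \cite{RV} and is \emph{not} proved in the present paper; it appears in the introduction only as background for the new $p=1$ result (Theorem~\ref{Th1intr}). There is therefore no ``paper's own proof'' to compare against here.

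That said, your argument is correct and is in fact the approach of \cite{RV}: reduce \eqref{EQ:Hardy1} to a one-dimensional Hardy inequality via the polar decomposition \eqref{EQ:polarintro} and then invoke the Muckenhoupt--Bradley characterization. The key step---the spherical H\"older inequality $g(r)\le h(r)^{1/p}w(r)^{1/p'}$, followed by setting $G=h^{1/p}w^{1/p'}$ so that the right-hand side becomes exactly $\bigl(\int_0^\infty G^p w^{1-p}\bigr)^{1/p}$---is the right device, and the identity $(1-p)(1-p')=1$ then identifies the one-dimensional Muckenhoupt constant with $\sup_{x\ne a}U(x)^{1/q}V(x)^{1/p'}$. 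For necessity your test function $f=v^{1-p'}\chi_{\mathbb B(a,r_0)}$ and the computation $(1-p')p+1=1-p'$ are standard and correct. One small remark: when you pass from $g\le G$ to the estimate you are using that the one-dimensional Muckenhoupt theorem gives the inequality for \emph{all} nonnegative $G$, hence in particular for your specific $G$; this is implicit but worth saying explicitly.

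Your parenthetical observation that the reduction breaks down at $p=1$ (since $p'=\infty$ and $v^{1-p'}$ degenerates) is exactly the point of the present paper: Theorem~\ref{Th1} replaces the H\"older/Muckenhoupt route by a Minkowski-type argument, and the dual weight $V(x)^{1/p'}$ is replaced by the essential supremum $\sup_{y\in\mathbb B(a,R)}v^{-1}(y)$.
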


In this paper we prove  two weight Hardy inequalities  on general metric measure spaces with a polar decomposition for the case $p=1$ and $1\le q<\infty.$ For the Euclidean space, this was established in \cite{NP}. It is worth noting that we do not assume any doubling condition on the measure.  This inequality will serve as the complementary Hardy inequality of \eqref{IntHar1} for the limiting case. We state the main result of this paper as follows:
\begin{theorem}\label{Th1intr}
Let $1\leq q<\infty$ and let $\mathbb{X}$ be a metric measure space with a polar decomposition satisfying \eqref{EQ:polarintro} at $a$. Let $u,v>0$ be measurable functions positive a.e. in $\mathbb{X}$ such that $u\in L^1(\mathbb{X}\backslash \{a\})$ 
and $v\in L^1_{loc}(\mathbb{X}).$ Then the  inequality
\begin{align}
\left[\int_\mathbb{X}\left(\int_{\mathbb{B}(a,|x|_a)}|f(z)| dz\right)^q u(x)\,dx\right]^\frac{1}{q}\le C\left\{\int_\mathbb{X}|f(x)|\,v(x)\,dx\right\} 
\end{align}
holds for all measurable functions $f:\mathbb{X}\rightarrow\mathbb{C}$ if and only if the following condition holds:
\begin{align}
    A:=\sup_{R>0}\left(\int_{\mathbb{X}\backslash\mathbb{B}(a,R)}u(y)\,dy\right)^\frac{1}{q}\left(\sup_{y\in\mathbb{B}(a,R)} v^{-1}(y)\right)<\infty.
\end{align}
\end{theorem}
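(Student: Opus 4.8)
The plan is to reduce the apparently two-dimensional inequality to a one-dimensional weighted Hardy inequality by means of the polar decomposition \eqref{EQ:polarintro}, treat the sufficiency by Minkowski's integral inequality, and obtain the necessity by inserting explicit test functions; these test functions will also pin down the sharp constant. Both sides involve only $|f|$, so we may assume $f\ge 0$. Write $r=|x|_a$. By \eqref{EQ:polarintro} applied to $f\,\chi_{\mathbb{B}(a,r)}$, the function $\phi(r):=\int_{\mathbb{B}(a,r)}f(z)\,dz$ depends on $r$ alone, is non-decreasing, and satisfies $\phi(r)=\int_0^r h(\tau)\,d\tau$ with $h(\tau):=\int_{\Sigma_\tau}f(\tau,\omega)\lambda(\tau,\omega)\,d\omega_\tau$. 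Applying \eqref{EQ:polarintro} once more to the outer integral, the $q$-th power of the left-hand side equals $\int_0^\infty \phi(r)^q\big(\int_{\Sigma_r}u(r,\omega)\lambda(r,\omega)\,d\omega_r\big)\,dr$, and with $\mathcal{U}(R):=\int_{\mathbb{X}\backslash\mathbb{B}(a,R)}u(y)\,dy$ one has $\mathcal{U}(R)=\int_R^\infty\big(\int_{\Sigma_r}u\lambda\,d\omega_r\big)\,dr$, a non-increasing function of $R$ that is moreover continuous (because $u\in L^1(\mathbb{X}\backslash\{a\})$ and each sphere $\Sigma_r$ is $dx$-null). Finally, bounding a spherical integral below by its infimum gives $\int_{\mathbb{X}}f v\,dx\ge\int_0^\infty v_*(\tau)h(\tau)\,d\tau$, where $v_*(\tau):=\operatorname*{ess\,inf}_{\omega\in\Sigma_\tau}v(\tau,\omega)$.

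For the sufficiency I would assume $A<\infty$. Writing $\phi(r)=\int_0^\infty h(\tau)\chi_{\{\tau<r\}}\,d\tau$ and applying Minkowski's integral inequality (legitimate since $q\ge1$; for $q=1$ it is Tonelli) against the finite measure $\big(\int_{\Sigma_r}u\lambda\,d\omega_r\big)\,dr$, one gets
\[
\Big(\int_{\mathbb{X}}\phi(|x|_a)^q u(x)\,dx\Big)^{1/q}\le\int_0^\infty h(\tau)\,\mathcal{U}(\tau)^{1/q}\,d\tau.
\]
Thus it suffices to establish the pointwise bound $\mathcal{U}(\tau)^{1/q}\le A\,v_*(\tau)$ for a.e.\ $\tau>0$; given it, the right-hand side above is $\le A\int_0^\infty v_*(\tau)h(\tau)\,d\tau\le A\int_{\mathbb{X}}fv\,dx$, which is precisely the asserted inequality with $C=A$. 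To prove the pointwise bound: the definition of $A$ gives $\mathcal{U}(R)^{1/q}\le A\,\operatorname*{ess\,inf}_{y\in\mathbb{B}(a,R)}v(y)$ for all $R>0$; a Fubini-type argument on \eqref{EQ:polarintro} identifies $\operatorname*{ess\,inf}_{y\in\mathbb{B}(a,R)}v$ with the essential infimum of $v_*$ over $(0,R)$, so $v_*(\tau)\ge\operatorname*{ess\,inf}_{y\in\mathbb{B}(a,R)}v$ for a.e.\ $\tau<R$; running this over a countable family of $R$ decreasing to $\tau$ and using the continuity of $\mathcal{U}$ yields $\mathcal{U}(\tau)^{1/q}=\lim_{R\downarrow\tau}\mathcal{U}(R)^{1/q}\le A\lim_{R\downarrow\tau}\operatorname*{ess\,inf}_{\mathbb{B}(a,R)}v\le A\,v_*(\tau)$ for a.e.\ $\tau$.

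For the necessity I would suppose the inequality holds with some finite $C$. Fix $R>0$, set $b:=\operatorname*{ess\,inf}_{\mathbb{B}(a,R)}v$, and for $\varepsilon>0$ choose a measurable $E\subseteq\mathbb{B}(a,R)\cap\{v<b+\varepsilon\}$ with $0<|E|<\infty$ (possible by the definition of the essential infimum). Testing with $f=\chi_E$: for every $x$ with $|x|_a\ge R$ we have $E\subseteq\mathbb{B}(a,R)\subseteq\mathbb{B}(a,|x|_a)$, so $\int_{\mathbb{B}(a,|x|_a)}f=|E|$; hence the left-hand side is at least $|E|\,\mathcal{U}(R)^{1/q}$, while the right-hand side is $C\int_E v\le C(b+\varepsilon)|E|$. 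Cancelling $|E|$ and letting $\varepsilon\downarrow0$ gives $\mathcal{U}(R)^{1/q}\le C\,b$, that is $\mathcal{U}(R)^{1/q}\sup_{\mathbb{B}(a,R)}v^{-1}\le C$ when $b>0$ (the degenerate case $b=0$ forces $\mathcal{U}(R)=0$ and is absorbed by the convention $0\cdot\infty=0$). Taking the supremum over $R>0$ yields $A\le C<\infty$.

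Combining the two parts shows that the least admissible constant equals $A$, which is the claimed byproduct. I expect the only genuinely delicate step to be the measure-theoretic one in the sufficiency, namely the identification of the ball essential infimum $\operatorname*{ess\,inf}_{\mathbb{B}(a,R)}v$ with $\operatorname*{ess\,inf}_{0<\tau<R}v_*(\tau)$ together with the accompanying right-limit argument; this is what makes the ``ball'' quantity defining $A$ interact correctly with the ``sphere'' quantity $v_*(\tau)$ that the polar decomposition forces on the right-hand side. The polar reduction, Minkowski's inequality, and the test-function computation are otherwise routine.
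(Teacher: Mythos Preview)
Your necessity argument is essentially the paper's: both test with the indicator of a finite-measure subset of $\mathbb{B}(a,R)$ on which $v$ is within $\varepsilon$ (or $1/n$) of its essential infimum, restrict the outer integral to $|x|_a\ge R$, cancel the measure of the test set, and pass to the limit.

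For sufficiency, however, you take a longer route than the paper. You first reduce to one dimension via the polar decomposition, introduce the spherical infimum $v_*(\tau)=\operatorname*{ess\,inf}_{\Sigma_\tau}v$, and then must reconcile $v_*$ with the ball quantity $\operatorname*{ess\,inf}_{\mathbb{B}(a,R)}v$ by the Fubini/right-limit step you correctly flag as delicate. The paper bypasses all of this: it applies Minkowski's inequality directly on $\mathbb{X}\times\mathbb{X}$ (no polar coordinates) to obtain
\[
\Big(\int_{\mathbb{X}}\Big(\int_{\mathbb{B}(a,|x|_a)}|f|\Big)^q u(x)\,dx\Big)^{1/q}\le \int_{\mathbb{X}}|f(z)|\,\mathcal{U}(|z|_a)^{1/q}\,dz
=\int_{\mathbb{X}}|f(z)|\,v(z)\cdot\mathcal{U}(|z|_a)^{1/q}v^{-1}(z)\,dz,
\]
and then uses the trivial pointwise observation that $z\in\mathbb{B}(a,|z|_a)$ (closed ball), hence $v^{-1}(z)\le\sup_{y\in\mathbb{B}(a,|z|_a)}v^{-1}(y)$, so $\mathcal{U}(|z|_a)^{1/q}v^{-1}(z)\le A$ straight from the definition of $A$. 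No $v_*$, no continuity of $\mathcal{U}$, no limit in $R$; in fact the paper's proof never invokes the polar-decomposition hypothesis at all. Your argument can be made rigorous, but the polar reduction buys nothing here and creates the only genuinely nontrivial step in your write-up.
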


Theorem \ref{Th1intr} enables us to obtain interesting and new weighted integral Hardy inequality on homogeneous groups, on hyperbolic spaces and on Cartan-Hadamard manifolds for the case $p=1$ and $1\leq q <\infty$ (see Corollary \ref{cor4.1} and Corollary \ref{cor4.2}).

Apart from Section \ref{Intro}, this manuscript is divided in two sections. In the next section, we will recall the basics of metric measure space and homogeneous Lie groups with some other useful concepts which we will use in our main results. The last section is devoted to presenting important proofs of the main results of this paper.\\  
Throughout this paper, the symbol $A\asymp B$ means that $\exists\,C_{1},C_{2}>0$ such that $C_{1}A\leq B\leq C_{2}A$.

\section{Preliminaries } \label{preli}

In this section, we present a brief overview on the basics of metric measure spaces and homogeneous Lie groups. For more detail on metric measure space  and on homogeneous Lie group as well  of several functional inequalities on metric measure space and on homogeneous Lie groups, we refer to  monographs \cite{FR,FS,RY,RS,RV,RV1,RST,RSY,RS1} and references therein.\\
Let $(\mathbb{X}, d)$ be a metric space with a Borel measure d$x$ allowing
for the following polar decomposition at some fixed point $a \in \mathbb{X}$ and let us  assume that there is a locally
integrable function $\lambda \in L^
1_{loc}$ such that for all $f \in L^1
(\mathbb{X})$ we have

\begin{align}\label{polarform}
 \int_\mathbb{X} f(x) dx=\int_0^\infty\int_{\sum_r} f(r,\omega)\,\lambda(r,\omega) \,\text{d}\omega_r \,\text{d}r
\end{align}
for the sets $\sum_r=\{x\in \mathbb{X}; d(x,a)=r\}$ with a measure on it denoted by $d\omega =d\omega_r.$ The condition \eqref{polarform} is general  and we allow the function $\lambda$ to depend on the whole variable $x=(r,\omega).$\\
Let us briefly provide the motivation behind the condition \eqref{polarform} above. We first note that, when a metric measure space has a differential structure, one can easily deduce  the
function $\lambda(r, \omega)$  as the Jacobian of the polar change of coordinates or by polar decomposition formula on the corresponding differential metric measure space. The situation we are dealing with is very general and we do not assume any differential structure on the metric measure space $\mathbb{X}.$ So it seems natural to assume the decomposition formula \eqref{polarform} on $\mathbb{X}.$ It is worth noting that we do not impose  any doubling condition of the metric measure space $\mathbb{X}.$ Indeed, we provide several examples of $\mathbb{X}$ for which
the condition \eqref{polarform} is satisfied with different expressions for $\lambda(r,\omega)$ and some of them are of non-doubling volume growth such as hyperbolic spaces:
\begin{itemize}
    \item[(i)] {\bf Euclidean Spaces $\mathbb{R}^n$: }We have
    \begin{align*}
    \lambda(r,\omega)=r^{n-1}.
\end{align*}
    \item[(ii)]{\bf Homogeneous Lie groups:}
    If $Q$ is the homogeneous dimension of the homogeneous Lie group $\mathbb{G},$ then
    \begin{align*}
    \lambda(r,\omega)=r^{Q-1}.
\end{align*}
For more detail on homogeneous Lie groups, we refer to \cite{RS,FS}.
\item[(iii)]{\bf Hyperbolic Spaces:}
The hyperbolic space of dimension $n$  denoted by $\mathbb{H}^n,$ is the simply connected, $n$-dimensional Riemannian manifold of constant sectional curvature equal to $-1.$ It satisfies the stronger property of being a symmetric space.
In the case of hyperbolic spaces, we have
\begin{align*}
    \lambda(r,\omega)=(\sinh{r})^{n-1}.
\end{align*}
\item[(iv)]{\bf Cartan-Hadamard manifolds:}
Let $(M,g)$ be a Riemannian manifold. A Riemannian manifold is said to be a Cartan-Hadamard manifold if it is complete, simply connected and has non-positive sectional curvature. If $\mathbb{K}_M$ be the sectional curvature of the Riemannian manifold, then $\mathbb{K}_M\le0$  along each plane section at each point of $M.$ The exponential map $\exp_a:T_aM\rightarrow M$ is a diffeomorphism (Helgason \cite{H}). Let $J(\rho,\omega)$ be the density function on manifold $M,$ then we have the following polar decomposition:
\begin{align*}
    \int_Mf(x)\,dx=\int_0^\infty\int_{\mathbb{S}^{n-1}}
    f(\exp_a(\rho\omega))J(\rho,\omega)\rho^{n-1} \text{d}\rho\, \text{d}\omega,
\end{align*}
with $$\lambda(\rho,\omega)=J(\rho,\omega)\rho^{n-1}.$$
For more details on  Cartan-Hadamard manifolds, we refer to  \cite{GHL,BC,H}.
\end{itemize}

\subsection{Basics on homogeneous Lie groups} 

A Lie group $\mathbb{G}$ (identified with $(\mathbb{R}^N, \circ)$) is called a homogeneous Lie group if it is equipped  with a dilation mapping $$ D_\lambda:\mathbb{R}^N \rightarrow \mathbb{R}^N,\quad\lambda >0,$$ defined as 
\begin{equation}
 D_\lambda(x)= (\lambda^{v_1}x_1,\lambda^{v_2}x_2,\ldots,\lambda^{v_N}x_N) ,\quad v_1,v_2,\dots,v_N > 0,
\end{equation} 
which is an automorphism of the group $\mathbb{G}$ for each $\lambda >0 $. At times, we will denote the image of $x\in \mathbb{G}$ under $D_\lambda$ by $\lambda(x)$ or, simply $\lambda x$.
The homogeneous dimension $Q$ of a homogeneous Lie group $\mathbb{G}$ is defined by 
$$Q=v_1+v_2+\dots+v_N.$$
It is well known that a homogeneous Lie group  is necessarily nilpotent and unimodular. The Haar measure $dx$ on $\mathbb{G}$ is nothing but the Lebesgue measure $\mathbb{R}^N$. 

Let us denote  the volume of a measurable set $\omega \subset \mathbb{G}$ by $|\omega|$. Then we have the following consequences: for $\lambda >0$
\begin{equation}
    |D_\lambda(\omega)|=\lambda^Q |\omega|  \quad \text{and} \quad \int_{\mathbb{G}} f(\lambda x) dx = \lambda^{-Q}\int_{\mathbb{G}} f(x) dx.
\end{equation}
A quasi-norm on $\mathbb{G}$ is any continuous non-negative function $ |\cdot|:\mathbb{G} \rightarrow [0,\infty)$ satisfying the following conditions:
\begin{itemize}
    \item[(i)] $|x|=|x^{-1}|$ for all $x \in \mathbb{G},$
    \item[(ii)] $|\lambda x|=\lambda |x|$\,\, for all \,\, $x \in \mathbb{G}$ and $\lambda >0,$
    \item[(iii)] $|x|=0 \iff x=0.$
\end{itemize}

If $\mathfrak{S}= \{x \in \mathbb{G}: |x|=1\} \subset \mathbb{G}$ is the unit sphere with respect to the quasi-norm, then there is a unique Radon measure $\sigma$ on $\mathfrak{S}$ such that for all $f \in L^1(\mathbb{G})$, we have the following polar decomposition
\begin{equation} \label{polar}
    \int_{\mathbb{G}} f(x) dx =\int_0^\infty \int_\mathfrak{S} f(ry) r^{Q-1}d\sigma(y) dr.
\end{equation}

\section{Main Results}

\subsection{Hardy inequality in metric measure space for the case $\mathbf{p=1}$ and $\mathbf{1\le q<\infty}$}
We will denote the closed balls by  $\mathbb{B}(a,r)$ in the metric measure space $\mathbb{X},$  where $a$ and $r$ are the centre and the radius of the ball, respectively. If $d$ is the metric on $\mathbb{X}$ then we can express $\mathbb{B}(a,r)$ by
\begin{align*}
  \mathbb{B}(a,r):=\{x\in \mathbb{X};d(x,a)\le r\},  
\end{align*}
and we write 
$|x|_a:=d(x,a)$ for the fixed point  $a\in \mathbb{X}.$
\newline Our first result is the characterization of weights $u$ and $v$ for the $L^1-$ integral Hardy type inequality on  metric measure spaces. We will also discuss the conjugate $L^1-$ integral Hardy on metric measure spaces which is  given in  Theorem \ref{conj}.

\begin{theorem}\label{Th1}
Let $1\leq q<\infty$ and let $\mathbb{X}$ be a metric measure space with a polar decomposition satisfying \eqref{polarform} at $a$. Let $u,v>0$ be measurable functions positive a.e. in $\mathbb{X}$ such that $u\in L^1(\mathbb{X}\backslash \{a\})$ 
and $v\in L^1_{loc}(\mathbb{X}).$ Then the  inequality
\begin{align}\label{eq1}
\left[\int_\mathbb{X}\left(\int_{\mathbb{B}(a,|x|_a)}|f(z)| dz\right)^q u(x)\,dx\right]^\frac{1}{q}\le C\left\{\int_\mathbb{X}|f(x)|\,v(x)\,dx\right\}
\end{align}
holds for all measurable functions $f:\mathbb{X}\rightarrow\mathbb{C}$ if and only if the following condition holds:
\begin{align}\label{eq2}
    A:=\sup_{R>0}\left(\int_{\mathbb{X}\backslash\mathbb{B}(a,R)}u(y)\,dy\right)^\frac{1}{q}\left(\sup_{y\in\mathbb{B}(a,R)} v^{-1}(y)\right)<\infty.
\end{align}
The best constant $C$ in \eqref{eq1} is given by $C=A.$
\end{theorem}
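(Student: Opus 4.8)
The plan is to transfer \eqref{eq1} to a one–dimensional weighted Hardy inequality on $(0,\infty)$ via the polar decomposition \eqref{polarform}, and then to close it by a single application of Minkowski's integral inequality; the latter is available precisely because $q\ge1$, and this is the structural reason the case $p=1$ does not follow by letting $p\downarrow1$ in Theorem \ref{IntHar1}.

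For sufficiency, assume $A<\infty$ and fix $f$ with $\int_{\mathbb X}|f|\,v<\infty$ (otherwise \eqref{eq1} is trivial). In polar coordinates set $\tilde u(r):=\int_{\Sigma_r}u(r,\omega)\lambda(r,\omega)\,d\omega_r$, $g(\rho):=\int_{\Sigma_\rho}|f(\rho,\omega)|v(\rho,\omega)\lambda(\rho,\omega)\,d\omega_\rho$, and $\psi(r):=\sup_{y\in\mathbb B(a,r)}v^{-1}(y)$ (an essential supremum), which is nondecreasing in $r$. Since $\Sigma_\rho\subset\mathbb B(a,\rho)$, one has $\int_{\Sigma_\rho}|f(\rho,\omega)|\lambda(\rho,\omega)\,d\omega_\rho\le\psi(\rho)\,g(\rho)$, whence $\int_{\mathbb B(a,|x|_a)}|f|\le\int_0^{|x|_a}\psi(\rho)g(\rho)\,d\rho$. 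Using \eqref{polarform} again for the outer integral, the left side of \eqref{eq1} is at most $\bigl(\int_0^\infty(\int_0^r\psi(\rho)g(\rho)\,d\rho)^q\,\tilde u(r)\,dr\bigr)^{1/q}$. Writing $\int_0^r\psi(\rho)g(\rho)\,d\rho=\int_0^\infty\chi_{\{\rho<r\}}\psi(\rho)g(\rho)\,d\rho$ and applying Minkowski's integral inequality in $L^q\bigl(\tilde u(r)\,dr\bigr)$ bounds this by
$$\int_0^\infty\psi(\rho)\Bigl(\int_\rho^\infty\tilde u(r)\,dr\Bigr)^{1/q}g(\rho)\,d\rho=\int_0^\infty\psi(\rho)\Bigl(\int_{\mathbb X\setminus\mathbb B(a,\rho)}u\Bigr)^{1/q}g(\rho)\,d\rho\le A\int_0^\infty g(\rho)\,d\rho=A\int_{\mathbb X}|f|\,v,$$
the penultimate step being exactly condition \eqref{eq2}. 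This gives \eqref{eq1} with $C=A$. It is worth stressing that the cruder bound $\int_0^r\psi(\rho)g(\rho)\,d\rho\le\psi(r)\,\|g\|_{L^1}$ fails, since it produces the generally divergent integral $\int_0^\infty\psi(r)^q\tilde u(r)\,dr$; keeping the full factor $\psi(\rho)(\int_{\mathbb X\setminus\mathbb B(a,\rho)}u)^{1/q}$ inside the integral is the key point, and is where the argument differs from the $p>1$ case.

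For necessity and sharpness, fix $R>0$ with $\int_{\mathbb X\setminus\mathbb B(a,R)}u>0$ and $\varepsilon>0$, and pick a set $E$ of positive finite measure with $E\subset\{y\in\mathbb B(a,R):v(y)<\inf_{\mathbb B(a,R)}v+\varepsilon\}$ (the essential infimum). Testing \eqref{eq1} with $f=\chi_E$: the right side is at most $C\,(\inf_{\mathbb B(a,R)}v+\varepsilon)\,|E|$, while for every $x\notin\mathbb B(a,R)$ one has $E\subset\mathbb B(a,R)\subset\mathbb B(a,|x|_a)$, so the left side is at least $|E|\bigl(\int_{\mathbb X\setminus\mathbb B(a,R)}u\bigr)^{1/q}$. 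Cancelling $|E|$ and letting $\varepsilon\to0$ gives $\bigl(\int_{\mathbb X\setminus\mathbb B(a,R)}u\bigr)^{1/q}\sup_{\mathbb B(a,R)}v^{-1}\le C$, and taking the supremum over $R$ gives $A\le C$. Combined with the sufficiency part, \eqref{eq1} holds if and only if $A<\infty$, and the best constant is $C=A$.

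The main obstacle is the sufficiency step: one has to resist any wasteful pointwise majorization and instead route the entire weight through Minkowski's integral inequality. Secondary care is needed over the measure-theoretic reading of $v^{-1}$ (essential suprema over $\mathbb B(a,r)$, and over $\Sigma_\rho$ in the pointwise step), over the reduction letting one assume $\int_{\mathbb X}|f|\,v<\infty$ (and hence that $\int_{\mathbb B(a,r)}|f|<\infty$, using that $v$ is locally bounded below by $\psi(r)^{-1}$), and over degenerate radii $R$ with $\int_{\mathbb X\setminus\mathbb B(a,R)}u=0$, which impose no constraint on $A$.
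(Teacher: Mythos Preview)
Your proof is correct and follows the same strategy as the paper: Minkowski's integral inequality for sufficiency, and testing with characteristic functions of sets where $v$ is near its essential infimum for necessity and sharpness. The one difference worth noting is that you route the sufficiency argument through the polar decomposition \eqref{polarform}, whereas the paper applies Minkowski directly on $\mathbb{X}$---writing $\int_{\mathbb{B}(a,|x|_a)}|f(z)|\,dz=\int_{\mathbb{X}}|f(z)|\chi_{\{|z|_a\le|x|_a\}}\,dz$, swapping, and then multiplying and dividing by $v(z)$---which is slightly cleaner and shows that the polar structure is in fact not needed for this implication.
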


\begin{proof}
 For the proof of  Theorem \ref{Th1}, first we will prove that \eqref{eq2} implies \eqref{eq1}. We have

\begin{align}\label{eq3}
\left[\int_\mathbb{X}\left(\int_{\mathbb{B}(a,|x|_a)}|f(z)| dz\right)^q u(x)\,dx\right]^\frac{1}{q}= \left[\int_\mathbb{X}\left(\int_{\mathbb{X}}|f(z)|\mathbb{\chi}_{|z|_a\le{|x|_a}}(z,x)\,dz\right)^q u(x)\,dx\right]^\frac{1}{q},  
\end{align}
and by using Minkowski integral inequality in \eqref{eq3}, we get
\begin{align}\label{eq3.4}
  \nonumber&\left[\int_\mathbb{X}\left(\int_{\mathbb{X}}|f(z)|\mathbb{\chi}_{|z|_a\le{|x|_a}}(z,x)\,dz\right)^q u(x)\,dx\right]^\frac{1}{q} \le \int_\mathbb{X}\left(\int_{\mathbb{X}}|f(z)|^q\mathbb{\chi}_{|z|_a\le{|x|_a}}(z,x)\,u(x)dx\right)^\frac{1}{q}dz\\&\nonumber\quad\quad\quad\quad\quad\quad\quad\quad\quad\quad\quad=\int_\mathbb{X}|f(z)|\left(\int_{\mathbb{X}}\mathbb{\chi}_{|z|_a\le{|x|_a}}(z,x)\,u(x)dx\right)^\frac{1}{q}dz\\&\nonumber\quad\quad\quad\quad\quad\quad\quad\quad\quad\quad\quad\le \int_\mathbb{X}|f(z)|\left(\int_{\mathbb{X}\backslash\mathbb{B}(a,|z|_a)} u(x)dx\right)^\frac{1}{q}dz
  \\&\quad\quad\quad\quad\quad\quad\quad\quad\quad\quad\quad= \int_\mathbb{X}|f(z)|\,v(z)\left(\int_{\mathbb{X}\backslash\mathbb{B}(a,|z|_a)} u(x)dx\right)^\frac{1}{q}v^{-1}(z)dz.
\end{align}
  By simple calculation in \eqref{eq3.4} and applying \eqref{eq2}, we get
 \begin{align}\label{eq3.5}
  \left(\int_{\mathbb{X}\backslash\mathbb{B}(a,|z|_a)} u(x)dx\right)^\frac{1}{q}v^{-1}(z)\le\left(\int_{\mathbb{X}\backslash\mathbb{B}(a,|z|_a)} u(x)dx\right)^\frac{1}{q}\left(\sup_{y\in\mathbb{B}(a,|z|_a)}v^{-1}(y)\right) \le A . 
 \end{align}
 Again using \eqref{eq3.5} in \eqref{eq3.4}, we obtain
 \begin{align}
 \left[\int_\mathbb{X}\left(\int_{\mathbb{B}(a,|x|_a)}|f(z)| dz\right)^q u(x)\,dx\right]^\frac{1}{q}  \le A\int_\mathbb{X}|f(z)|\,v(z)\,dz. 
  \end{align}
  This  implies that \eqref{eq1} holds with $C=A.$ Then, the best constant $C$ in \eqref{eq1} satisfies $C \leq A.$

Conversely, denote
$S(R):=\sup_{z\in \mathbb{B}(a,R)}v^{-1}(z)$ for $R>0$ and then, for each $n\in\mathbb{N},$   define the following set $$\widetilde{P}_n:=\bigg\{z\in \mathbb{B}(a,R) ; v^{-1}(z)>S(R)-\frac{1}{n}\bigg\}.$$
  We note that $|\widetilde{P}_n|>0$ by the definition of $S(R)$   and   $\widetilde{P_n} \subset \mathbb{B}(a, R)$ implies that $|\widetilde{P}_n|<\infty$ for every $n \in \mathbb{N}.$
Now, for $R>0,$  we have
\begin{align}\label{eq3.7}
   \nonumber\bigg[\int_{\mathbb{X}\backslash \mathbb{B}(a,R)}|\widetilde{P}_n|^q\,u(x)\, dx\bigg]^\frac{1}{q}&= \bigg[\int_{\mathbb{X}\backslash \mathbb{B}(a,R)}\bigg(\int_{\mathbb{B}(a, R)}\chi_{\widetilde{P}_n}(y)dy\bigg)^q\,u(x)\, dx\bigg]^\frac{1}{q} \\&\le \nonumber \bigg[\int_{\mathbb{X}\backslash \mathbb{B}(a,R)}\bigg(\int_{\mathbb{B}(a, |x|_a)}\chi_{\widetilde{P}_n}(y)dy\bigg)^q\,u(x)\, dx\bigg]^\frac{1}{q}\\&\le\bigg[\int_{\mathbb{X}}\bigg(\int_{\mathbb{B}(a,|x|_a)}\chi_{\widetilde{P}_n}(y)dy\bigg)^q\,u(x)\, dx\bigg]^\frac{1}{q},
\end{align}
where in the second inequality we used that $|x|_a>R.$

 Using \eqref{eq1} in \eqref{eq3.7}, we obtain
 \begin{align}\label{eq3.8}
  \bigg[\int_{\mathbb{X}\backslash \mathbb{B}(a,R)}|\widetilde{P}_n|^q\,u(x)\, dx\bigg]^\frac{1}{q}\le C \int_{\mathbb{X}} \chi_{\widetilde{P}_n}(x)\, v(x)\,dx = C \int_{\widetilde{P}_n}  v(x) dx, 
 \end{align}
and we have 
\begin{align}\label{eq3.9}
 \int_{\widetilde{P}_n} v(x)\,dx   \le \bigg(S(R)-\frac{1}{n}\bigg)^{-1}\,|\widetilde{P}_n|.
\end{align}
 Again using \eqref{eq3.9} in \eqref{eq3.8}, we get
 \begin{align}
 \bigg[\int_{\mathbb{X}\backslash \mathbb{B}(a,R)}|\widetilde{P}_n|^q\,u(x)\, dx\bigg]^\frac{1}{q} \le C  \bigg(S(R)-\frac{1}{n}\bigg)^{-1}\,|\widetilde{P}_n|. 
 \end{align}
 As $n\rightarrow \infty,$
 we have
 \begin{align*}
  \bigg[\int_{\mathbb{X}\backslash \mathbb{B}(a,R)}u(x)\, dx\bigg]^\frac{1}{q} \le C  \bigg(S(R)\bigg)^{-1}  
 \end{align*}
 which says that
 \begin{align}
     \bigg[\int_{\mathbb{X}\backslash \mathbb{B}(a,R)}u(x)\, dx\bigg]^\frac{1}{q} S(R)\le C 
 \end{align} for all $R>0,$ which is our required result \eqref{eq2}. Indeed, this shows that 
 $$A:=\sup_{R>0}\left(\int_{\mathbb{X}\backslash\mathbb{B}(a,R)}u(y)\,dy\right)^\frac{1}{q}\left(\sup_{y\in\mathbb{B}(a,R)} v^{-1}(y)\right) \leq C$$
 as $C$ is independent of $R.$ Therefore, the best constant $C$ in \eqref{eq1} is given by $C=A.$ This completes the proof of this theorem. 
\end{proof}
Let us also briefly discuss the conjugate of the $L^1$- Hardy inequality in  Theorem \ref{Th1}:
\begin{theorem}\label{conj}
Let $1\leq q<\infty$ and let $\mathbb{X}$ be a metric measure space with a polar decomposition satisfying \eqref{polarform} at $a$. Let $u,v>0$ be measurable functions positive a.e. in $\mathbb{X}$ such that $u\in L_{loc}^1(\mathbb{X})$ 
and $v\in L^1_{loc}(\mathbb{X}\backslash \{a\}).$ Then the  inequality 
\begin{align}\label{eqconj}
\left[\int_\mathbb{X}\left(\int_{\mathbb{X}\backslash\mathbb{B}(a,|x|_a)}|f(z)| dz\right)^q u(x)\,dx\right]^\frac{1}{q}\le C\left\{\int_\mathbb{X}|f(x)|\,v(x)\,dx\right\},  
\end{align}
holds for all measurable functions $f:\mathbb{X}\rightarrow\mathbb{C}$ if and only if the following condition holds: 
\begin{align}
    A:=\sup_{R>0}\left(\int_{\mathbb{B}(a, R)}u(y)\,dy\right)^\frac{1}{q}\left(\sup_{y\in\mathbb{X}\backslash \mathbb{B}(a, R)}v^{-1}(y)\right)<\infty.
\end{align}
The best constant $C$ in \eqref{eqconj} is given by $C=A.$
\end{theorem}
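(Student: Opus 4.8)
The plan is to mirror the proof of Theorem \ref{Th1} almost verbatim, exploiting the fact that the conjugate inequality \eqref{eqconj} is obtained from \eqref{eq1} by interchanging the roles of the ``inner'' and ``outer'' regions with respect to the radial variable. Concretely, the inner integral now runs over $\mathbb{X}\setminus\mathbb{B}(a,|x|_a)$ rather than $\mathbb{B}(a,|x|_a)$, so the relevant characteristic function is $\chi_{|z|_a\ge|x|_a}(z,x)$, and the supremum of $v^{-1}$ is taken over the complement of a ball while the integral of $u$ is taken over the ball itself. All the estimates go through with these replacements, so I expect no genuinely new obstacle; the only thing to be careful about is that the hypotheses are the mirror image ($u\in L^1_{loc}(\mathbb{X})$ and $v\in L^1_{loc}(\mathbb{X}\setminus\{a\})$), which guarantee the finiteness of the quantities appearing in the argument.

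For the sufficiency direction (condition implies \eqref{eqconj}), I would rewrite the left-hand side of \eqref{eqconj} as
\[
\left[\int_\mathbb{X}\left(\int_{\mathbb{X}}|f(z)|\,\chi_{|z|_a\ge{|x|_a}}(z,x)\,dz\right)^q u(x)\,dx\right]^\frac{1}{q},
\]
apply the Minkowski integral inequality to pull the $q$-th power inside, evaluate the inner integral in $x$ as $\int_{\mathbb{B}(a,|z|_a)}u(x)\,dx$ (since $|x|_a\le|z|_a$ is the region picked out by the characteristic function), and then insert $v(z)v^{-1}(z)$ as in \eqref{eq3.4}. The key pointwise bound replacing \eqref{eq3.5} is
\[
\left(\int_{\mathbb{B}(a,|z|_a)}u(x)\,dx\right)^\frac{1}{q}v^{-1}(z)\le\left(\int_{\mathbb{B}(a,|z|_a)}u(x)\,dx\right)^\frac{1}{q}\left(\sup_{y\in\mathbb{X}\setminus\mathbb{B}(a,|z|_a)}v^{-1}(y)\right)\le A,
\]
where the first inequality holds because $z\in\mathbb{X}\setminus\mathbb{B}(a,|z|_a)$ fails on the boundary; to be safe one should work with $\mathbb{X}\setminus\mathbb{B}(a,r)$ for $r<|z|_a$ and let $r\uparrow|z|_a$, or simply note that $v^{-1}(z)\le\sup_{y\notin\mathbb{B}(a,r)}v^{-1}(y)$ for every $r<|z|_a$ since $z\notin\mathbb{B}(a,r)$, then use monotone behavior of the $u$-integral. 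This yields \eqref{eqconj} with $C=A$, hence $C\le A$.

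For the necessity direction, I would reverse the construction in the proof of Theorem \ref{Th1}: fix $R>0$, set $S^*(R):=\sup_{z\in\mathbb{X}\setminus\mathbb{B}(a,R)}v^{-1}(z)$, and for each $n\in\mathbb{N}$ choose a set $\widetilde{Q}_n\subset\mathbb{X}\setminus\mathbb{B}(a,R)$ of positive finite measure on which $v^{-1}>S^*(R)-\tfrac1n$; finiteness of $|\widetilde{Q}_n|$ needs a little care since $\mathbb{X}\setminus\mathbb{B}(a,R)$ may have infinite measure, so I would intersect with a large ball $\mathbb{B}(a,M)$ and note $|\widetilde{Q}_n|>0$ can be arranged by definition of the sup. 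Testing \eqref{eqconj} with $f=\chi_{\widetilde{Q}_n}$ and using that for $x\in\mathbb{B}(a,R)$ we have $\widetilde{Q}_n\subset\mathbb{X}\setminus\mathbb{B}(a,R)\subset\mathbb{X}\setminus\mathbb{B}(a,|x|_a)$ gives
\[
\left[\int_{\mathbb{B}(a,R)}|\widetilde{Q}_n|^q\,u(x)\,dx\right]^\frac1q\le\left[\int_\mathbb{X}\left(\int_{\mathbb{X}\setminus\mathbb{B}(a,|x|_a)}\chi_{\widetilde{Q}_n}(z)\,dz\right)^q u(x)\,dx\right]^\frac1q\le C\int_{\widetilde{Q}_n}v(x)\,dx\le C\left(S^*(R)-\tfrac1n\right)^{-1}|\widetilde{Q}_n|.
\]
Dividing by $|\widetilde{Q}_n|$, letting $n\to\infty$, and then taking the supremum over $R>0$ yields $A\le C$. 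Combining both directions gives the characterization and $C=A$. The main obstacle, if any, is purely bookkeeping: ensuring the approximating sets have finite positive measure and that the one-sided (closed vs. open ball) subtleties in passing between $v^{-1}(z)$ and $\sup_{y\notin\mathbb{B}(a,r)}v^{-1}(y)$ are handled by a limiting argument rather than asserted directly.
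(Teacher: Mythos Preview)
Your proposal is correct and follows exactly the approach the paper intends: the paper omits the proof entirely, stating only that it ``is similar to the proof of Theorem~\ref{Th1}, and is omitted,'' and your mirror argument with the inner/outer regions swapped is precisely that similar proof. Your added remarks on the boundary subtlety (that $z\in\mathbb{B}(a,|z|_a)$ so a limiting argument is needed) and on ensuring $0<|\widetilde{Q}_n|<\infty$ via intersection with a large ball are legitimate points of care that the paper does not spell out, but they do not constitute a different route.
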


\begin{proof}
The proof of the theorem  is similar  to the proof of  Theorem \ref{Th1}, and is omitted.\end{proof}

\subsection{Applications and examples}
In this section, we will give examples of the application of Theorem \ref{Th1} in the settings of homogeneous Lie groups, hyperbolic spaces and Cartan-Hadamard manifolds.
\begin{itemize}
    \item[(a)]\textbf{Homogeneous Lie groups :} Let $\mathbb{G}$ be a homogeneous Lie group of homogeneous dimension $Q,$ equipped with a quasi norm $|\cdot|$. We refer to \cite{RS,FR} for  more information on the set-up of homogeneous groups.
Particular examples of homogeneous Lie groups are the Euclidean space $\mathbb{R}^n$ (in which case $Q = n$), the
Heisenberg group, as well as general stratified Lie groups (homogeneous Carnot groups) and graded
groups. Here we use   notation for the quasi norm $|\cdot|,$  and denote $|x|_a$
 by $|x|$ with $a=0.$\\ Let us suppose that the  power weights are given by
$$u(x)=|x|^{-\beta q}\quad\text{and}\quad v(x)=|x|^{\alpha}.$$   For  the case $1\le q<\infty$ the inequality \eqref{eq1} holds  if and only if the following condition 
\begin{align}\label{eq4.1}
 A=\sup_{R>0}\bigg(|\mathfrak{S}|\int_R^\infty\rho^{-\beta q}\rho^{Q-1}\,d\rho \bigg)^\frac{1}{q} \bigg(\sup_{s \in (0, R)} s^{-\alpha}\bigg) <\infty,
\end{align}
holds,
where  $\mathfrak{S}$ is the area of the unit sphere in homogeneous Lie group $\mathbb{G}$ equipped with quasi norm $|\cdot|.$ For this supremum to be well defined, we need to have $\beta q>Q$ and $\alpha\le0.$ Again from \eqref{eq4.1}, we have
\begin{align}
   A=|\mathfrak{S}|^\frac{1}{q}\sup_{R>0}\bigg(\int_R^\infty\rho^{-\beta q+Q-1} d\rho\bigg)^\frac{1}{q}\bigg(R^{-\alpha}\bigg)
   =|\mathfrak{S}|^\frac{1}{q}\sup_{R>0}\bigg(\frac{R^{\frac{-\beta q+Q}{q}}}{|-\beta q+Q|^\frac{1}{q}}\bigg)\bigg(R^{-\alpha}\bigg),
\end{align}
which is finite if and only if $\alpha+\beta=\frac{Q}{q}.$\\
After the above conclusion, we obtain the following result:
\begin{cor}\label{cor4.1}
Let $\mathbb{G}$ be a homogeneous Lie group of homogeneous dimension $Q,$ equipped with a quasi norm $|\cdot|.$ Let $1\le q<\infty$ and let $\alpha, \beta \in \mathbb{R}.$ Then the inequality 
\begin{align}\label{eq4.3}
    \bigg(\int_\mathbb{G}\bigg(\int_{\mathbb{B}(0,|x|)} f(y)\, dy\bigg)^q|x|^{-\beta q} dx\bigg)^\frac{1}{q}\le C
   \bigg \{\int_\mathbb{G}|f(x)|\,|x|^{\alpha}dx\bigg\}
\end{align}
holds for all measurable functions $f:\mathbb{G}\rightarrow\mathbb{C}$ if and only if $\beta q>Q,\,\alpha<0 $ and $\alpha+\beta =\frac{Q}{q}.$ Moreover, the best constant  $C$ is  given by
\begin{align}
    C=|\mathfrak{S}|^\frac{1}{q}\left(\frac{1}{|\beta q-Q|^\frac{1}{q}}\right),
\end{align}
where $|\mathfrak{S}|$ is the area of the unit sphere in the homogeneous Lie group $\mathbb{G}$ with respect to the quasi norm $|\cdot|.$
\end{cor}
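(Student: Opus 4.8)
The plan is to apply Theorem \ref{Th1} directly with $\mathbb{X}=\mathbb{G}$, $a=0$, and the explicit power weights $u(x)=|x|^{-\beta q}$, $v(x)=|x|^\alpha$, using the polar decomposition \eqref{polar} of the homogeneous group with $\lambda(r,\omega)=r^{Q-1}$. The integrability hypotheses of Theorem \ref{Th1} must first be checked: $u\in L^1(\mathbb{G}\setminus\{0\})$ and $v\in L^1_{loc}(\mathbb{G})$. Since $\int_{\mathbb{G}\setminus\{0\}}|x|^{-\beta q}\,dx = |\mathfrak{S}|\int_0^\infty \rho^{-\beta q+Q-1}\,d\rho$, this fails to be finite for any single $\beta$ — but the relevant quantity for the theorem is $\int_{\mathbb{G}\setminus\mathbb{B}(0,R)}u$, which is finite precisely when $-\beta q+Q<0$, i.e. $\beta q>Q$; similarly $v\in L^1_{loc}$ near $0$ requires $\alpha+Q>0$, and one should observe this is implied by the eventual conclusion. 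So I would phrase the argument as: the hypotheses needed to even make sense of condition \eqref{eq2} force $\beta q>Q$, and then the finiteness of $A$ is the remaining constraint to analyze.

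Next I would compute $A$ explicitly. By the polar decomposition,
\[
\int_{\mathbb{G}\setminus\mathbb{B}(0,R)}|x|^{-\beta q}\,dx = |\mathfrak{S}|\int_R^\infty \rho^{-\beta q+Q-1}\,d\rho = \frac{|\mathfrak{S}|}{\beta q-Q}\,R^{Q-\beta q},
\]
valid and positive exactly when $\beta q>Q$. For the second factor, $\sup_{y\in\mathbb{B}(0,R)}v^{-1}(y)=\sup_{0<s\le R}s^{-\alpha}$; this supremum is finite only if $\alpha\le 0$, in which case it equals $R^{-\alpha}$. Hence
\[
A=|\mathfrak{S}|^{1/q}\sup_{R>0}\left(\frac{R^{Q-\beta q}}{\beta q-Q}\right)^{1/q}R^{-\alpha}=\frac{|\mathfrak{S}|^{1/q}}{|\beta q-Q|^{1/q}}\sup_{R>0}R^{(Q-\beta q)/q-\alpha}.
\]
The power of $R$ is $\frac{Q}{q}-\beta-\alpha$, and $\sup_{R>0}R^{c}<\infty$ iff $c=0$; thus $A<\infty$ iff $\alpha+\beta=\frac{Q}{q}$, and in that case $A=|\mathfrak{S}|^{1/q}/|\beta q-Q|^{1/q}$. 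I would also note that $\alpha\le 0$ together with $\alpha+\beta=Q/q$ and $\beta q>Q$ forces $\alpha<0$ strictly (if $\alpha=0$ then $\beta=Q/q$, contradicting $\beta q>Q$), matching the stated hypotheses.

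Finally I would assemble the equivalence: by Theorem \ref{Th1}, inequality \eqref{eq4.3} holds for all measurable $f$ iff $A<\infty$, which by the computation above is equivalent to the conjunction $\beta q>Q$, $\alpha<0$, $\alpha+\beta=Q/q$; and the best constant is $C=A=|\mathfrak{S}|^{1/q}\bigl(|\beta q-Q|^{-1/q}\bigr)$. No step here is genuinely hard — the only mild subtlety, which I expect to be the main point requiring care, is the bookkeeping on the admissible ranges of $\alpha$ and $\beta$: making sure the integrability hypotheses of Theorem \ref{Th1} are not vacuous, that the two suprema defining $A$ are each separately finite (giving $\beta q>Q$ and $\alpha\le 0$), and that the scaling condition $\alpha+\beta=Q/q$ then upgrades $\alpha\le 0$ to $\alpha<0$. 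Once these constraints are tracked correctly, the result is an immediate specialization of Theorem \ref{Th1}.
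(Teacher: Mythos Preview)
Your proposal is correct and is essentially the same argument as the paper's: specialize Theorem~\ref{Th1} to the power weights, pass to polar coordinates via \eqref{polar} to compute $A$ explicitly, and read off the constraints from the finiteness of each factor and of the resulting power of $R$. Your extra bookkeeping---upgrading $\alpha\le 0$ to $\alpha<0$ via $\alpha+\beta=Q/q$ together with $\beta q>Q$, and the remark that only $\int_{\mathbb{G}\setminus\mathbb{B}(0,R)}u<\infty$ (not global $L^1$) is really needed---goes slightly beyond what the paper writes out but is correct and helpful.
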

\item[(b)]
\textbf{Hyperbolic space:}
Let $\mathbb{H}^n$ be the hyperbolic space of dimension $n$ and let $a\in \mathbb{H}^n.$ Let us  take the weights  $$u(y)=\bigg(\sinh{|y|_a}\bigg)^{-\beta q}\quad\text{and}\quad v(y)=\bigg(\sinh{|y|_a}\bigg)^{\alpha}.$$
Using the polar decomposition in the first term of \eqref{eq2}, we get
\begin{align}\label{eq4.0}
 A\simeq \sup_{R>0} \bigg(\int_R^\infty\left(\sinh{\rho}\right)^{-\beta q+n-1}d\rho\bigg)^\frac{1}{q} \bigg(\sup_{y\in\mathbb{B}(0,R)}\left(\sinh{|y|_a}\right)^{-\alpha}\bigg).
\end{align} 

Next, we want to show that the supremum defined in \eqref{eq4.0} is finite.
For $R\gg1$, we have $ \sinh{z}\approx \exp{z}.$ Then  \eqref{eq4.0} can be written as 
\begin{align}\label{eq4.6}
\nonumber A&\simeq \begin{cases}
\sup_{R\gg1} \bigg(\int_{R}^\infty (\exp{\rho})^{-\beta q+n-1} d\rho\bigg)^\frac{1}{q}  &\quad\text{if}\quad \alpha \geq 0\\
\sup_{R\gg1} \bigg(\int_{R}^\infty (\exp{\rho})^{-\beta q+n-1} d\rho\bigg)^\frac{1}{q} \left(\exp{R}\right)^{-\alpha} &\quad\text{if}\quad \alpha \leq  0
\end{cases}
\\&\simeq \begin{cases}      \sup_{R\gg1}  \bigg(\exp{R\bigg)^{{\frac{-\beta q+n-1}{q}}}} &\quad\text{if}\quad \alpha \geq 0\\
 \sup_{R\gg1}  \bigg(\exp{R\bigg)^{{\frac{-\beta q+n-1}{q}}-\alpha}} &\quad\text{if}\quad \alpha \leq 0,
\end{cases}
\end{align} 
which are finite if and only if $\beta \ge \frac{n-1}{q}$ and  $\alpha+\beta \ge \frac{n-1}{q},$ respectively. 
\newline Similarly, if $R\ll1$ and for some small $L$ such as $\sinh{\rho}_{{R}\le\rho<L}\approx \rho,$
 \eqref{eq4.0} can be written as
\begin{align}\label{eq4.7}
  \nonumber &A\simeq \sup_{R\ll1}\bigg(\int_{R}^\infty  (\sinh{\rho})^{-\beta q+n-1}d\rho\bigg)^\frac{1}{q}\bigg(\sup_{y\in \mathbb{B}(0, R)}(\sinh{|y|_a})^{-\alpha}\bigg)\\&\nonumber\simeq \sup_{R \ll 1}\bigg(\int_{R}^L(\sinh{\rho})^{-\beta q+n-1}d\rho+\int_L^\infty(\sinh{\rho})^{-\beta q+n-1}d\rho\bigg)^\frac{1}{q}\bigg(\sup_{y\in \mathbb{B}(0,R)}(\sinh{|y|_a})^{-\alpha}\bigg)\\&\
  \nonumber\simeq \sup_{R\ll1}\bigg(\int_{R}^L({\rho})^{-\beta q+n-1}d\rho+\int_L^\infty(\exp{\rho})^{-\beta q+n-1}d\rho\bigg)^\frac{1}{q}\bigg({R}^{-\alpha}\bigg)\\&\nonumber\approx
  \sup_{R\ll1}\bigg(\frac{1}{-\beta q+n}\rho^{-\beta q+n}\bigg|_{R}^L+\frac{1}{(-\beta q+n-1)}\left(\exp{\rho}\right)^{-\beta q+n-1}\big|_L^\infty\bigg)^\frac{1}{q}\bigg({R}^{-\alpha}\bigg)\\&\approx \sup_{R\ll1}\bigg({R}^{-\beta q+n}+C_L\bigg)^\frac{1}{q}\left(R^{-\alpha}\right),
\end{align} 
where $C_L$ is the  remainder term and the condition $\alpha \leq 0$ is used in third inequality.
Now if $-\beta q+n\ge0,$ then again from \eqref{eq4.7}
this is $\approx  \sup_{R\ll1}R^{-\alpha}, $  which is finite if and only if $\alpha\le0.$ At the same time if $-\beta q+n<0,$ then we observe that \eqref{eq4.7} is $ \approx\sup_{R\ll1}R^{\frac{-\beta q+n}{q}-\alpha},$ which is finite if and only if $\frac{-\beta q+n}{q}-\alpha\ge 0$ which is further equivalent to $\alpha+\beta \leq \frac{n}{q}.$ 
\newline Therefore, after the above conclusion we obtain the following result.

\begin{cor}\label{cor4.2}
Let $\mathbb{H}^n$ be the hyperbolic space, $a\in \mathbb{H}^n.$  Let $|x|_a$ denote the hyperbolic distance from $x$ to $a.$ Let  $\alpha,\beta\in \mathbb{R}$ and $1\le q<\infty.$ Then the following inequality 
\begin{align}
    \bigg(\int_{\mathbb{H}^n}\bigg(\int_{\mathbb{B}(0,|x|_a)}|f(y)|dy\bigg)^q\left(\sinh{|x|_a}\right)^{-\beta q}dx\bigg)^\frac{1}{q}\le C\bigg\{\int_{\mathbb{H}^n}|f(x)|\left(\sinh{|x|_a}\right)^\alpha dx\bigg\}
\end{align}
holds for all measurable functions $f:\mathbb{H}^n\rightarrow\mathbb{C}$ if and only if the parameters $\alpha,\beta$ satisfy either of the following conditions:
\begin{itemize}
    \item[(a)] for $-\beta q+n\ge 0,$ if and only if $ \alpha \leq 0$ and $\alpha+\beta\ge \frac{n-1}{q},$
\item[(b)] for $-\beta q+n< 0,$ if and only if $\alpha \le 0$ and   $\frac{n-1}{q} \leq \alpha+\beta\leq \frac{n}{q}.$
\end{itemize}
\end{cor}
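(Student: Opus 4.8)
The plan is to deduce Corollary~\ref{cor4.2} directly from Theorem~\ref{Th1}, applied with the power weights $u(y)=(\sinh|y|_a)^{-\beta q}$ and $v(y)=(\sinh|y|_a)^{\alpha}$: for this choice the inequality of the corollary is exactly \eqref{eq1}, so it holds if and only if the quantity $A$ of \eqref{eq2} is finite. First I would feed the hyperbolic polar decomposition from Section~\ref{preli} (case (iii), $\lambda(r,\omega)=(\sinh r)^{n-1}$) into \eqref{eq2}; the area of the unit sphere $\mathbb{S}^{n-1}$ is a fixed constant and can be absorbed into $\asymp$, leaving
\begin{align*}
A\asymp\sup_{R>0}\left(\int_R^\infty(\sinh\rho)^{-\beta q+n-1}\,d\rho\right)^{\frac1q}\left(\sup_{y\in\mathbb{B}(a,R)}(\sinh|y|_a)^{-\alpha}\right).
\end{align*}
The whole problem thus reduces to characterizing the parameters for which this is finite; checking $u\in L^1(\mathbb{H}^n\setminus\{a\})$, $v\in L^1_{loc}(\mathbb{H}^n)$ under the final constraints (and noting, for the regime $-\beta q+n<0$ where $u$ fails to be integrable near $a$, that the proof of Theorem~\ref{Th1} only uses finiteness of the exterior integrals $\int_{\mathbb{X}\setminus\mathbb{B}(a,R)}u$, which holds as soon as $\beta q>n-1$) is routine.

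Next I would dispose of the inner supremum. If $\alpha>0$ then $t\mapsto(\sinh t)^{-\alpha}$ is unbounded as $t\to0^+$, so $\sup_{y\in\mathbb{B}(a,R)}(\sinh|y|_a)^{-\alpha}=+\infty$ for every $R>0$ and $A=\infty$; hence $\alpha\le0$ is necessary. For $\alpha\le0$ the map is nondecreasing, so the inner supremum equals $(\sinh R)^{-\alpha}$ and $A\asymp\sup_{R>0}\bigl(\int_R^\infty(\sinh\rho)^{-\beta q+n-1}\,d\rho\bigr)^{1/q}(\sinh R)^{-\alpha}$. Since the function of $R$ inside this supremum is continuous and strictly positive on $(0,\infty)$, $A<\infty$ holds precisely when it remains bounded both as $R\to\infty$ and as $R\to0$, and I would analyse the two ends separately using $\sinh\rho\asymp e^{\rho}$ for $\rho\gg1$ and $\sinh\rho\asymp\rho$ for $\rho\ll1$, in the spirit of \eqref{eq4.6}--\eqref{eq4.7}.

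At the tail $R\to\infty$: $\int_R^\infty(\sinh\rho)^{-\beta q+n-1}\,d\rho\asymp e^{(-\beta q+n-1)R}$, finite only if $\beta q>n-1$, and then the whole expression behaves like $e^{R((-\beta q+n-1)/q-\alpha)}$, which is bounded if and only if $\alpha+\beta\ge\frac{n-1}{q}$. Near $R\to0$: split $\int_R^\infty=\int_R^L+\int_L^\infty$, with $\int_L^\infty$ a finite constant and $\int_R^L(\sinh\rho)^{-\beta q+n-1}\,d\rho\asymp\int_R^L\rho^{-\beta q+n-1}\,d\rho$. If $-\beta q+n>0$, this integral stays bounded as $R\to0$, and since $(\sinh R)^{-\alpha}\asymp R^{-\alpha}$ with $-\alpha\ge0$ is bounded near $0$, the small-$R$ end imposes no further constraint beyond $\alpha\le0$; this, together with the tail condition, is case (a). If $-\beta q+n<0$, then $\int_R^L\rho^{-\beta q+n-1}\,d\rho\asymp R^{-\beta q+n}\to\infty$, so the whole expression behaves like $R^{(-\beta q+n)/q-\alpha}$, bounded if and only if $\alpha+\beta\le\frac nq$; combined with the tail condition this yields $\frac{n-1}{q}\le\alpha+\beta\le\frac nq$, which is case (b). Assembling the two regimes under $\alpha\le0$ and invoking Theorem~\ref{Th1} gives exactly the stated characterization.

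The main obstacle is this two-sided asymptotic bookkeeping: hyperbolic space carries two different scales — Euclidean ($\sinh\rho\approx\rho$) near $a$ and exponential ($\sinh\rho\approx\frac12 e^{\rho}$) at infinity — so the exponent that controls convergence of the radial integral near $a$ (namely $-\beta q+n$) is not the one that controls convergence at infinity (the sign of $-\beta q+n-1$), and the weight factor $(\sinh R)^{-\alpha}$ couples to both ends. Deciding which of $R\to0$ or $R\to\infty$ is the binding constraint, and treating the borderline exponents $-\beta q+n=0$ (logarithmic growth of the radial integral) and $\beta q=n-1$ carefully, is where precision is required; the rest is the routine estimation already sketched in \eqref{eq4.0}--\eqref{eq4.7}.
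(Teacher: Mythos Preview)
Your proposal is correct and follows essentially the same route as the paper: apply Theorem~\ref{Th1} with $u=(\sinh|y|_a)^{-\beta q}$, $v=(\sinh|y|_a)^{\alpha}$, use the hyperbolic polar decomposition to reduce $A$ to a one-dimensional supremum, and then analyse the behaviour for $R\gg1$ and $R\ll1$ via $\sinh\rho\asymp e^{\rho}$ and $\sinh\rho\asymp\rho$ respectively, splitting on the sign of $-\beta q+n$. Your treatment is in fact slightly tidier than the paper's in that you eliminate $\alpha>0$ at the outset (so the inner supremum is simply $(\sinh R)^{-\alpha}$) and you flag the borderline exponents $-\beta q+n=0$ and $\beta q=n-1$, but these are minor organisational differences rather than a different method.
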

\item[(c)]
\textbf{Cartan-Hadamard Manifold:} 
 Let $(M,g)$ be a Cartan-Hadamard manifold with constant sectional curvature $\mathbb{K}_M.$ Let $J(t,\omega)$ is a function of $t$ only. More precisely, if $\mathbb{K}_M=-b$ for $b\ge 0,$ then $J(t,\omega)=1$ if $b=0,$ and $J(t,\omega)=\left(\frac{\sinh{\sqrt{b}t}}{\sqrt{b}t}\right)^{n-1}$ for $b>0.$
\newline When $b=0,$ then let us assume that $u(x)=|x|_a^{-\beta q}$ and $v(x)=|x|_a^\alpha$ then the inequality \eqref{eq2} holds for $1\le q<\infty$ if and only if 
\begin{align}\label{eq4.5}
    A=\sup_{R>0}\bigg(\int_{M\backslash\mathbb{B}(0,R)}|z|_a^{-\beta q} dz\bigg)^\frac{1}{q}\bigg(\sup_{z\in\mathbb{B}(a, R)}|z|_a^{-\alpha}\bigg)<\infty.
\end{align}
After changing to polar coordinates in \eqref{eq4.5} first term, we get 
\begin{align}
  \sup_{R>0}\bigg(\int_{R}^\infty\rho^{-\beta q+n-1} d\rho\bigg)^\frac{1}{q}\bigg(\sup_{z\in\mathbb{B}(a,R)}|z|_a^{-\alpha}\bigg),  \end{align} 
 which is finite if and only if conditions of Corollary \ref{cor4.1} hold with $Q=n.$ \\
When $b>0$ and if we take weights $u(x)=(\sinh{\sqrt{b}|x|_a})^{-\beta q}$ and $v(x)=(\sinh{\sqrt{b}|x|_a})^\alpha,$ then the inequality \eqref{eq2} holds for $1\le q<\infty$ if and only if
\begin{align*}
 \sup_{R>0}\bigg(\int_{M\backslash\mathbb{B}(a,R)} (\sinh{\sqrt{b}|z|_a})^{-\beta q}dz\bigg)^\frac{1}{q}\bigg(\sup_{z\in \mathbb{B}(0, R)}(\sinh{\sqrt{b}|z|_a})^{-\alpha}\bigg)<\infty.  
\end{align*}
Again changing to polar coordinates,
\begin{align*}
 &\sup_{R>0}\bigg(\int_{R}^\infty (\sinh{\sqrt{b}\mu})^{-\beta q}\bigg(\frac{\sinh{b}\mu}{\sqrt{b}\mu}\bigg)^{n-1} \mu^{n-1}\,d\mu\bigg)^\frac{1}{q}\bigg(\sup_{z\in \mathbb{B}(0, R)}(\sinh{\sqrt{b}|z|_a})^{-\alpha}\bigg)\\& \simeq
 \sup_{R>0}\bigg(\int_{R}^\infty(\sinh{\sqrt{b}\mu})^ {-\beta q+n-1} d\mu\bigg)^\frac{1}{q} \bigg(\sup_{z\in \mathbb{B}(0, R)} (\sinh{\sqrt{b}|z|_a})^{-\alpha}\bigg),
\end{align*}
 \noindent which has the same condition as in the case of the hyperbolic space as in Corollary \ref{cor4.2}.
\end{itemize}

\section*{acknowledgement} 
This paper was completed when the second author was visiting Ghent University, Belgium. She is very grateful to Ghent Analysis \& PDE centre, Ghent University, Belgium for the financial support and warm hospitality during her research visit.
MR is supported by the FWO Odysseus 1 grant G.0H94.18N: Analysis and Partial
Differential Equations, the Methusalem programme of the Ghent University Special Research
Fund (BOF) (Grant number 01M01021) and by EPSRC grant
EP/R003025/2. AS is supported by UGC Non-net fellowship from Banaras Hindu University, India.

\end{document}